\documentclass[preprent]{elsarticle}
\usepackage{graphicx, epstopdf}
\usepackage{amssymb,amsmath}
\usepackage{amsbsy}
\usepackage{amsthm}
\usepackage{multirow}
\usepackage{indentfirst}
\usepackage{amscd}
\numberwithin{equation}{section}

\newtheorem{Theorem}{Theorem}[section]

\newtheorem{Assumption-Notation}[Theorem]{Assumption-Notation}

\newtheorem{Remark}{Remark}[section]

\newtheorem{Lemma}{Lemma}[section]


%
%

\usepackage{caption}
\captionsetup{format=plain, labelfont=bf,
singlelinecheck=off, labelsep=newline}

\long\def\symbolfootnote[#1]#2{\begingroup
\def\thefootnote{\fnsymbol{footnote}}\footnote[#1]{#2}\endgroup}

\journal{arXiv.org}
\begin{document}
\begin{frontmatter}
\title{Occupation times of generalized Ornstein-Uhlenbeck processes with two-sided exponential jumps}

 \author{Jiang Zhou}
 \ead{1101110056@pku.edu.cn}
 \author{Lan Wu \corref{cor1}}
 \ead{lwu@pku.edu.cn}
 \cortext[cor1]{Corresponding author.}
 \address{School of Mathematical Sciences, Peking University, Beijing 100871, PR China}

\begin{abstract}{For an Ornstein-Uhlenbeck process driven by a double exponential jump diffusion process, we obtain formulas for the joint Laplace transform of it and its occupation times.
The approach used is remarkable and can be extended to investigate the occupation times of an Ornstein-Uhlenbeck process driven by a more general L\'evy process. }
\end{abstract}

\begin{keyword}
Ornstein-Uhlenbeck process; Occupation times; Exit problem.
\end{keyword}
\end{frontmatter}
\section{Introduction}
A generalized Ornstein-Uhlenbeck process $X=(X_t)_{t \geq 0}$ is characterized by the following  equation:
\begin{equation}
d X_t = \kappa (\alpha- X_t)dt+ d L_t,\ \ t>0,
\end{equation}
where $\kappa > 0$,  $\alpha \in \mathbb R$ and $X_0=x$ is non-random; $L=(L_t)_{t \geq 0}$ is a L\'evy process. The first passage time of $X$ has been investigated considerably, the reader is referred to Jacobsen and Jensen (2007) and Borovkov and Novikov (2008) and literatures therein for the details.

In this article, we are interested in the joint Laplace transform of $X$ and its occupation times, i.e.,
$E \left[e^{- p \int_{0}^{T}\textbf{1}_{\{X_t \leq b \}}dt + q X_T}\right]$,
where $p > 0$, $q$ is some suitable constant and for a given set $A$, $\textbf{1}_{A}$ is the indicator function; and the objection is deriving formulas for its Laplace transform, i.e.,
\begin{equation}
\int_{0}^{\infty} e^{- s T} E \left[e^{- p \int_{0}^{T}\textbf{1}_{\{X_t \leq b \}}dt+q X_T}\right]dT, \ \ for \ \ s > 0.
\end{equation}
If $L_t$ in (1.1) is a Brownian motion with drift, formulas for (1.2) with $q = 0$ are known, one can refer to Li and Zhou (2013) for example. Thus, we focus on the case that $L_t$ is a jump diffusion process, and to the best of our knowledge, we are the first to investigate (1.2) under the assumption that $L_t$ has jumps.

Results obtained here can be applied to price occupation time derivatives as in Cai et al. (2010), in which the authors have noted that there are several products in the real market with payoffs depending on the occupation times of an interest rate or a spread of swap rates (see Remark 3.3 in that paper). Usually, interest rates or spreads of swap rates are modeled by generalized Ornstein-Uhlenbeck processes since they are mean reversion. Therefore, our results are very important for pricing such derivatives.

The remainder of the paper is organized as follows. Section 2 presents the details of our model and some important preliminary results, and  Section 3 derives the main results.

\section{Details of the model and some preliminary outcomes}
\subsection{The model}
In this paper, the L\'evy process $L=(L_t)_{t \geq 0}$ in (1.1) is assumed to be a double exponential jump diffusion process, i.e.,
\begin{equation}
L_t = \mu t+\sigma W_t + \sum_{k=1}^{N_t}Y_k.
\end{equation}
where $\mu \in \mathbb R$ and $\sigma > 0$ are constants; $W=(W_t)_{t\geq 0}$ is a standard Brownian motion and is independent of $\sum_{k=1}^{N_t}Y_k$, which is a compound Poisson process with intensity $\lambda$ and the following jump distribution:
\begin{equation}
f_{Y}(y) = P\left(Y_1 \in dy\right)/dy=p \eta e^{-\eta y} \textbf{1}_{\{y > 0\}} + (1-p) \vartheta e^{\vartheta y}\textbf{1}_{\{y < 0\}},
\end{equation}
where $0< p <1$ and $\eta_, \vartheta > 0$. For the investigation on such a L\'evy process $L_t$, we refer to Kou and Wang (2003).

In what follows, for given $x \in \mathbb R$, $\mathbb P_x$ is the law of $X$ starting from $x$ with
$\mathbb E_x$ denoting the matching expectation. If $x=0$, we drop the subscript and write simply $\mathbb P$ and $\mathbb E$. Recall (1.1) and note that
\[
X_t=\alpha + e^{-\kappa t} (X_0-\alpha)+\int_{0}^{t}e^{\kappa (s-t)}dL_s, \ \ t \geq 0,
\]
and
\[
\varphi(\theta):=\ln\left(\mathbb E\left[e^{\theta L_1}\right]\right)=\mu \theta +\frac{\sigma^2}{2} \theta^2 +\lambda\left(\frac{p\eta}{\eta-\theta}+\frac{(1-p)\vartheta}{\vartheta + z}-1\right),\ \ -\vartheta < \theta < \eta.
\]
It is known that (see Theorem 1 in Lukacs (1969))
\[
\mathbb E\left[e^{\int_{0}^{t}\hat{f}(s)dL_s}\right]=e^{\int_{0}^{t}\varphi(\hat{f}(s))ds},
\]
where $\hat{f}(s)$ is a non-random function. Thus, for $-\vartheta < \theta < \eta$, we can derive
\begin{equation}
\begin{split}
&\ln\left(\mathbb E_x\left[e^{\theta X_t}\right]\right)
=\theta e^{-\kappa t}x + \theta \alpha(1-e^{-\kappa t})+
\mu \theta \frac{1-e^{-\kappa t}}{\kappa}\\
&+\frac{\sigma^2(1-e^{-2\kappa t})}{4 \kappa}\theta^2
+\lambda\left(\frac{p}{\kappa}\ln\Big(\frac{\eta-\theta e^{-\kappa t}}{\eta - \theta}\Big)+
\frac{(1-p)}{\kappa}\ln\Big(\frac{\vartheta+\theta e^{-\kappa t}}{\vartheta+\theta}\Big)\right),
\end{split}
\end{equation}
where in the above derivation, we have used the following identity:
\[
t=\frac{p}{\kappa}\ln\Big( e^{\kappa t}\Big)+
\frac{(1-p)}{\kappa}\ln\Big(e^{\kappa t}\Big).
\]
For given $-\vartheta < \theta < \eta$, note that
\begin{equation}
\lim_{t \uparrow \infty} \ln\left(\mathbb E_x\left[e^{\theta X_t}\right]\right)
=\theta \alpha+
\frac{\mu \theta}{\kappa}+\frac{\sigma^2}{4 \kappa}\theta^2
+\lambda\Big(\frac{p}{\kappa}\ln\big(\frac{\eta}{\eta - \theta}\big)+
\frac{(1-p)}{\kappa}\ln\big(\frac{\vartheta}{\vartheta+\theta}\big)\Big).
\end{equation}

For the process $X$ given by (1.1) and (2.1), the purpose of the paper is to deduce formulas for (1.2), i.e.,
\begin{equation}
\int_{0}^{\infty} e^{- s T}\mathbb E_x\left[e^{- p \int_{0}^{T}\textbf{1}_{\{X_t \leq b \}}dt+q X_T}\right]dT, \ \ for \ \ s > 0.
\end{equation}
Our approach depends on results about the one-sided exit problem of $X$, which will be presented in the next subsection.
\begin{Remark}
In Cai et al. (2010), they have obtained expressions for (2.5) under the assumption that the process $X$ is a double exponential jump diffusion process (i.e., $X_t=L_t$ or $\kappa =0$ in (1.1)). A contribution here is extending their results to the case of $\kappa >0$.
\end{Remark}

\begin{Remark}
The method in this article can be extended to calculate (2.5) when $X_t$ is given by (1.1) and $L_t$ is a hyper-exponential jump diffusion process\footnote{In other words, the distribution of $Y_1$ in (2.1) is generalized to the following form:
\[f_{Y}(y) = \sum_{i=1}^{m}p_i\eta_i e^{-\eta_i y}\textbf{1}_{\{y > 0\}} + \sum_{j=1}^{n}q_j\vartheta_j e^{\vartheta_j y}\textbf{1}_{\{y < 0\}}.\]}.
But, to illustrate the ideas in our approach clearly, it is desirable to consider a simper model.
\end{Remark}

\subsection{Results on the one-sided exit problem of $X$}
First of all, for $a, c \in \mathbb R$, define
\begin{equation}
\tau_a^- := \inf\{t > 0: X_t \leq a\}, \ \ \ \tau_c^+ := \inf\{t > 0: X_t \geq c \}.
\end{equation}
For the stopping time $\tau_a^-$, $q > 0$ and $\xi \geq 0$, we want to compute the following quantities:
\begin{equation}
\mathbb E_x\left[e^{- q \tau_a^- - \xi (a-X_{\tau_a^-})}\rm{\bf{1}}_{\{X_{\tau_a^-} < a\}}\right] \ \ and \ \  \mathbb E_x\left[e^{- q \tau_a^-}\rm{\bf{1}}_{\{X_{\tau_a^-} = a\}}\right], \ \ for \ \  x > a.
\end{equation}

If $\sigma=\mu=\alpha=0$ in (1.1) and (2.1), formulas for (2.7) have been derived in Jacobsen and Jensen (2007)(see Proposition 5), in which they also considered the case of $\sigma > 0$, $\alpha \in \mathbb R$ and $\mu=0$. Applying similar ideas in Jacobsen and Jensen (2007) can lead to the following Lemma 2.1 for the case of $\mu \in \mathbb R$. Before giving Lemma 2.1, we introduce some notations.

For $\alpha$ given in (1.1), $q>0$ and $x \in \mathbb R$, define
\begin{equation}
\hat{x}:= x - \alpha \ \ and \ \
\psi_q(x):=x^{\frac{q}{\kappa}-1}e^{-\frac{\sigma^2}{4 \kappa}x^2+\frac{\mu}{\kappa}x}(\eta+x)^{\frac{\lambda p}{\kappa}}(x-\vartheta)^{\frac{\lambda (1-p)}{\kappa}},
\end{equation}
where $\psi_q(x)$ is treated as a complex valued function when $x < \vartheta$.
And for $\xi, \rho \geq 0$, $q > 0$, $x \in \mathbb R$ and $i=1,2,3,4$,
\begin{equation}
\begin{split}
&F^q_i(x):=\int_{\Gamma _i} |\psi_q(z)|e^{-xz} dz, \\
&D^{\rho,q}_i(x):=\int_{\Gamma_i}\frac{\eta +\rho}{z+\eta}|\psi_q(z)|e^{- x z} dz,\\
&C^{\xi,q}_i(x):=-\int_{\Gamma _i}\frac{\vartheta +\xi}{z-\vartheta}|\psi_q(z)|e^{- x z} dz,
\end{split}
\end{equation}
where $\Gamma_1=(0,\vartheta)$, $\Gamma_2=(\vartheta,\infty)$, $\Gamma_3=(-\eta,0)$ and  $\Gamma_4=(-\infty,-\eta)$; $|\psi_q(z)|$ is the module of $\psi_q(z)$.

\begin{Lemma}
For $x > a$, $q > 0$ and $\xi \geq 0$,
\begin{equation}
\left(\begin{array}{cc}
\mathbb E_x\left[e^{-q \tau_a^-}\rm{\bf{1}}_{\{X_{\tau_a^- = a }\}}\right]\\
\mathbb E_x\left[e^{-q \tau_a^- -\xi(a-X_{\tau_a^-})}\rm{\bf{1}}_{\{X_{\tau_a^- < a }\}}\right]
\end{array}\right)=\left(\begin{array}{ccc}
F_1^{q}(\hat{a})&C_1^{\xi,q}(\hat{a})\\
F_2^{q}(\hat{a})&C_2^{\xi,q}(\hat{a})\\
\end{array}\right)^{-1}\left(\begin{array}{cc}
F^q_1(\hat{x})\\
F^q_2(\hat{x})\\
\end{array}\right).
\end{equation}
\end{Lemma}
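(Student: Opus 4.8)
The plan is to adapt the martingale method of Jacobsen and Jensen. I would first center the process by writing $\hat X_t = X_t - \alpha$, so that $d\hat X_t = -\kappa\hat X_t\,dt + dL_t$, the stopping time $\tau_a^-$ becomes the first down-crossing of the level $\hat a$ by $\hat X$, and the generator reads $\mathcal A f(y) = (\mu-\kappa y)f'(y) + \tfrac{\sigma^2}{2}f''(y) + \lambda\int(f(y+u)-f(y))f_Y(u)\,du$. I would then search for eigenfunctions $\mathcal A h = qh$ in Laplace-transform form $h(y) = \int_{\Gamma}\psi_q(z)e^{-yz}\,dz$. Substituting this ansatz, the jump term becomes $\int_\Gamma\psi_q(z)e^{-yz}\bigl[-pz/(z+\eta)+(1-p)z/(\vartheta-z)\bigr]\,dz$, and the only term with a non-constant coefficient, $\kappa y\int_\Gamma z\psi_q(z)e^{-yz}\,dz$, is recast through $ye^{-yz}=-\partial_z e^{-yz}$ and one integration by parts. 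Requiring the resulting integrand to vanish turns $\mathcal A h = qh$ into a first-order linear ODE for $\psi_q$; solving it reproduces exactly formula (2.8), which is the reason $\psi_q$ is defined that way. Hence $e^{-qt}h(\hat X_t)$ is a local martingale for every admissible contour.

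Next I would fix the contours: $\Gamma_1,\dots,\Gamma_4$ are the intervals of $\mathbb R$ cut out by the singular points $-\eta,0,\vartheta$ of $\psi_q$. On each, $\psi_q$ has a constant complex phase (it is real and positive on $\Gamma_2$), so replacing $\psi_q$ by $|\psi_q|$ only rescales $h$ by a constant and leaves it an eigenfunction; this is why the real-valued $F_i^q$ in (2.10) may be used. The Gaussian factor $e^{-\sigma^2z^2/(4\kappa)}$ (here $\sigma>0$ is essential) gives convergence of the contour integrals at $\pm\infty$, while the exponents $q/\kappa-1>-1$ and $\lambda p/\kappa,\lambda(1-p)/\kappa>0$ give integrability at the singular points and, crucially, force the boundary terms $z\psi_q(z)e^{-yz}$ from the integration by parts to vanish — precisely the admissibility condition selecting which contours can be used. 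I would take the two contours $\Gamma_1,\Gamma_2$ that make the eigenfunctions decay as $y\to+\infty$, so that $e^{-q(t\wedge\tau_a^-)}F_i^q(\hat X_{t\wedge\tau_a^-})$ is controlled on the continuation region $(\hat a,\infty)$ and optional stopping can be applied.

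Applying optional stopping at $\tau_a^-$ gives $F_i^q(\hat x)=\mathbb E_x[e^{-q\tau_a^-}F_i^q(\hat X_{\tau_a^-})]$, and I would split this according to the two ways $\hat X$ can cross $\hat a$. On the creeping event $\{X_{\tau_a^-}=a\}$ (which has positive probability because $\sigma>0$) one has $\hat X_{\tau_a^-}=\hat a$, contributing $F_i^q(\hat a)\,\mathbb E_x[e^{-q\tau_a^-}\mathbf 1_{\{X_{\tau_a^-}=a\}}]$. On the jump event $\{X_{\tau_a^-}<a\}$ I would write $\hat X_{\tau_a^-}=\hat a-O$ with overshoot $O=a-X_{\tau_a^-}$ and invoke the key memoryless property: the crossing is effected by a downward jump whose size is $\mathrm{Exp}(\vartheta)$, so conditionally on crossing, $O\sim\mathrm{Exp}(\vartheta)$ independently of $\tau_a^-$ and of $\mathcal F_{\tau_a^-}$. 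This lets me factor $\mathbb E_x[e^{-q\tau_a^-}e^{Oz}\mathbf 1_{\{X_{\tau_a^-}<a\}}]$, and after matching the emerging factor $\vartheta/(\vartheta-z)$ against the definition of $C_i^{\xi,q}$ in (2.10) (the overshoot Laplace transform $\mathbb E[e^{-\xi O}]=\vartheta/(\vartheta+\xi)$ absorbing the $\xi$-dependence), the jump contribution becomes $C_i^{\xi,q}(\hat a)\,\mathbb E_x[e^{-q\tau_a^- - \xi(a-X_{\tau_a^-})}\mathbf 1_{\{X_{\tau_a^-}<a\}}]$. Each contour thus yields the scalar identity $F_i^q(\hat x)=F_i^q(\hat a)A+C_i^{\xi,q}(\hat a)B$ with $A,B$ the two sought quantities.

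Writing this identity for $i=1$ and $i=2$ gives the $2\times2$ system in (2.11), and inverting the coefficient matrix yields the stated formula, once I check that the matrix is nonsingular, which I would deduce from the linear independence of $F_1^q$ and $F_2^q$. The main obstacle I anticipate is not the algebra but the rigorous justification of the martingale step: controlling the contour integrals so that the formal eigenfunction relation genuinely produces a martingale up to $\tau_a^-$, and in particular handling the integrability generated by the downward jumps. The expectation $\mathbb E[e^{Oz}]=\vartheta/(\vartheta-z)$ converges only for $z<\vartheta$, so on $\Gamma_2$ the factorization must be reached by analytic continuation or a localization argument rather than a direct computation, whereas on $\Gamma_1$ it is immediate. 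Verifying the memoryless-overshoot factorization precisely via the strong Markov property at the crossing jump, and confirming the vanishing of all boundary terms, are the remaining delicate points.
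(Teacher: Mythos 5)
Your overall strategy is the paper's: the Jacobsen--Jensen eigenfunction method, the same two contours $\Gamma_1,\Gamma_2$, the same $2\times 2$ linear system $F_i^q(\hat a)A+C_i^{\xi,q}(\hat a)B=F_i^q(\hat x)$, and the same reduction to $\alpha=0$. The derivation of the ODE for $\psi_q$ via integration by parts in $z$ and the role of the boundary terms $z\psi_q(z)e^{-xz}$ are exactly as in the paper. But your execution of the boundary step has a genuine gap on $\Gamma_2$. You propose to apply optional stopping to $e^{-qt}F_i^q(\hat X_t)$ and then evaluate $\mathbb E_x\left[e^{-q\tau_a^-}F_i^q(\hat X_{\tau_a^-})\right]$ using the conditional $\mathrm{Exp}(\vartheta)$ law of the undershoot. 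For $i=2$ this identity is not merely delicate --- it is false as an equality of finite quantities: since $F_2^q(y)=\int_\vartheta^\infty \psi_q(z)e^{-yz}\,dz$ grows like $e^{\kappa y^2/\sigma^2}$ as $y\to-\infty$ (Laplace's method on the Gaussian factor), the jump integral $\int_{-\infty}^0 F_2^q(y+u)\vartheta e^{\vartheta u}\,du$ diverges, so $F_2^q$ extended below $a$ by its integral formula is not $q$-harmonic, $e^{-qt}F_2^q(\hat X_t)$ is not a martingale past the first jump below $a$, and $\mathbb E_x\left[e^{-q\tau_a^-}F_2^q(\hat X_{\tau_a^-})\mathbf 1_{\{X_{\tau_a^-}<a\}}\right]=+\infty$. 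You flag this ($\mathbb E[e^{Oz}]=\vartheta/(\vartheta-z)$ only for $z<\vartheta$) but the proposed remedies, ``analytic continuation or a localization argument,'' are not worked out, and the formal substitution $\vartheta/(\vartheta-z)\mapsto -\vartheta/(z-\vartheta)$ on $\Gamma_2$, while it produces the correct answer $C_2^{\xi,q}$, is not a proof.

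The paper's device is designed precisely to avoid this: it does not use $F_i^q$ below the barrier at all, but glues a bounded exponential there, defining $f_i(x)=F_i^q(x)$ for $x\ge a$ and $f_i(x)=C_i^{\xi,q}(a)e^{-\xi(a-x)}$ for $x<a$ (equations (2.11), (2.17)). The harmonicity $(\mathcal A-q)f_i=0$ is then verified only on $[a,\infty)$, where the portion of the jump integral reaching below $a$ converges trivially because $f_i$ is bounded there; the exchange of integration order in (2.12) produces exactly the term $C_i^{\xi,q}(a)\lambda(1-p)\frac{\vartheta}{\vartheta+\xi}e^{\vartheta(a-x)}$ that makes everything cancel. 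It\^o plus dominated convergence then gives $\mathbb E_x[e^{-q\tau_a^-}f_i(X_{\tau_a^-})]=f_i(x)$, and the $\xi$-dependence is already encoded in the definition of $f_i$ below $a$, so no appeal to the conditional overshoot law is needed (the paper obtains that law as a corollary in Remark 2.3, whereas you use it as an input). To repair your argument you would either have to adopt this gluing, or replace $F_2^q$ below $a$ by some other integrable extension and justify the resulting identity by a limiting argument --- in either case the missing ingredient is a concrete, $q$-harmonic-on-$[a,\infty)$ candidate function whose jump integral converges.
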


\begin{proof}
The derivation depends on some similar ideas and calculations in the proof of Proposition 5 in Jacobsen and Jensen (2007).

(i) Assume $\alpha =0$, then $x=\hat{x}$ (recall (2.8)) for any $x \in \mathbb R$. Consider a function $f_1(x)$ defined as follows:
\begin{equation}
f_1(x)=\left\{\begin{array}{cc}
F_1^q(x), & x \geq a,\\
C^{\xi,q}_1(a) e^{-\xi (a-x)}, & x < a.
\end{array}\right.
\end{equation}
Obviously, $f_1(x)$ is bounded and differentiable on $[a,\infty)$.

Let $\mathcal{A}$ be the generator of $X$. In other words, for $x \geq a$,
\[
\mathcal{A} f_1(x)=-\kappa x f_1^{\prime}(x)+\mu f_1^{\prime}(x)+\frac{\sigma^2}{2}f_1^{\prime \prime}(x)
+\lambda \int_{-\infty}^{\infty}f_1(x+y)f_Y(y)dy-\lambda f_1(x),
\]
where $f_Y(y)$ given by (2.2).
Then, for $x \geq a$, some straightforward calculations will lead to
\begin{equation}
\begin{split}
&\mathcal{A} f_1(x)- q f_1(x)= \kappa x\int_{\Gamma_1}z|\psi_q(z)|e^{-xz}dz - \mu \int_{\Gamma_1}z|\psi_q(z)|e^{-xz}dz
\\
&+\frac{\sigma^2}{2}\int_{\Gamma_1}z^2|\psi_q(z)|e^{-xz}dz
+\lambda p \int_{\Gamma_1}\frac{\eta}{\eta+z}|\psi_q(z)|
e^{-x z}dz\\
&+ \lambda (1-p)\vartheta \int_{\Gamma_1}\frac{1-e^{(\vartheta-z)(a-x)}}{\vartheta-z}|\psi_q(z)|e^{-xz}dz
-\lambda p f_1(x)\\
&-\lambda (1-p)f_1(x) - q f_1(x)+C^{\xi,q}_1(a) \lambda (1-p)\frac{\vartheta}{\vartheta+\xi}e^{\vartheta(a-x)},
\end{split}
\end{equation}
where the fourth and fifth integral on the right-hand side of (2.12) follows from exchanging the order of integration.

Recall $\Gamma_1=(0,\vartheta)$. For the first integral on the right-hand side of (2.12), applying partial integration yields
\begin{equation}
\kappa x\int_{0}^{\vartheta}z|\psi_q(z)|e^{-xz}dz=\kappa \int_{0}^{\vartheta}\left(\frac{\partial |\psi_q(z)|}{\partial z}z+|\psi_q(z)|\right)e^{-xz}dz,
\end{equation}
where we have used that
\[
\lim_{z \downarrow 0}z|\psi_q(z)|=0 \ \ and  \ \ \lim_{z\uparrow \vartheta}z|\psi_q(z)|=0.
\]
From (2.9), (2.11) and (2.13),  we can write the right-hand side of (2.12) as
\[
\int_{\Gamma_1}e^{-xz}\Big(\kappa \frac{\partial|\psi_q(z)|}{\partial z}z +|\psi_q(z)|\big(\kappa+\frac{\sigma^2}{2}z^2
-\mu z- \frac{\lambda p z}{\eta+z}+\frac{\lambda(1-p) z}{\vartheta-z}-q\big)\Big)dz,
\]
which combined with the definition of $\psi_q(z)$ in (2.8), produces\footnote{In fact, the expression of $|\psi_q(z)|$ for $0< z < \vartheta$ is obtained by solving the equation:
\[\kappa \frac{\partial|\psi_q(z)|}{\partial z}z +|\psi_q (z)|\left(\kappa+\frac{\sigma^2}{2}z^2
-\mu z-\lambda p \frac{z}{\eta+z}+\lambda(1-p)\frac{z}{\vartheta-z}-q\right)=0.\] }
\begin{equation}
\mathcal{A} f_1(x)- q f_1(x)=0, \ \ for \ \ x \geq a.
\end{equation}

From (2.14), It$\hat{o}$'s formula and the dominated convergence theorem will give
\begin{equation}
\lim_{t \uparrow \infty}\mathbb E_x\left[e^{-q (\tau_a^- \wedge t)} f_1(X_{\tau_a^-\wedge t})\right]=\mathbb E_x\left[e^{-q \tau_a^-} f_1(X_{\tau_a^-})\right]=f_1(x), \ \ x \geq a.
\end{equation}
It follows from (2.15) and the definition of $f_1(x)$ for $x < a$ in (2.11) that
\begin{equation}
C_1^{\xi,q}(a) \mathbb E_x\left[e^{-q \tau_a^- - \xi (a-X_{\tau_a^-})}\textbf{1}_{\{X_{\tau_a^-} < a\}}\right]+
\mathbb E_x\left[e^{-q \tau_a^- }\textbf{1}_{\{X_{\tau_a^-} = a\}}\right]f_1(a)=f_1(x),  \ \ x \geq a.
\end{equation}

Similarly, define
\begin{equation}
f_2(x)=\left\{\begin{array}{cc}
F_2^{q}(x), & x \geq a,\\
C^{\xi,q}_2(a) e^{-\xi (a-x)}, & x < a,
\end{array}\right.
\end{equation}
where $F_2^{q}(x)$ and $C^{\xi,q}_2(a)$ are given by (2.9). Note that $\Gamma_2=(\vartheta,\infty)$ and $|\psi_q(z)|=\psi_q(z)$ for $z > \vartheta$. In addition,
\[
\lim_{z \downarrow \vartheta}z\psi_q(z)=0 \ \ and  \ \ \lim_{z\uparrow \infty}z \psi_q(z)=0,
\]
and for $z > \vartheta$,
\[\kappa \psi_q^{\prime}(z)z +\psi_q (z)\left(\kappa+\frac{\sigma^2}{2}z^2
-\mu z-\lambda p \frac{z}{\eta+z}+\lambda(1-p)\frac{z}{\vartheta-z}-q\right)=0.
\]
Thus, for $x \geq a$, it can be proved that
\begin{equation}
C_2^{\xi,q}(a) \mathbb E_x\left[e^{-q \tau_a^- - \xi (a-X_{\tau_a^-})}\textbf{1}_{\{X_{\tau_a^-} < a\}}\right]+
\mathbb E_x\left[e^{-q \tau_a^- }\textbf{1}_{\{X_{\tau_a^-} = a\}}\right]f_2(a)=f_2(x).
\end{equation}

Therefore, formula (2.10) for $\alpha =0$ is derived from (2.11) and (2.16)--(2.18).

(ii) If $\alpha \neq 0$, let $\hat{X}_t=X_t-\alpha$ for $t \geq 0$ and note that
\[
d \hat{X}_t= - \kappa \hat{X}_t dt+ d L_t.
\]
This yields the desired result.
\end{proof}

\begin{Remark}
Lemma 2.1 implies that
\begin{equation}
\mathbb E_x\left[e^{-q \tau_a^- -\xi(a-X_{\tau_a^-})}\rm{\bf{1}}_{\{X_{\tau_a^- < a }\}}\right]=\frac{F^q_1(\hat{a})F^q_2(\hat{x})-F^q_2(\hat{a})F^q_1(\hat{x})}
{C^{\xi,q}_2(\hat{a}) F^q_1(\hat{a})-C^{\xi,q}_1(\hat{a}) F^q_2(\hat{a})}.
\end{equation}
Recall (2.9) and note that the right-hand side of (2.19) can be written as $\hat{F}(\hat{a},\hat{x})/(\vartheta + \xi)$, where $\hat{F}(\hat{a},\hat{x})$ is not dependent on $\xi$. So formula (2.19) confirms the following well-known result:
 \[\mathbb E_x\left[e^{-q \tau_a^-}\rm{\bf{1}}_{\{a- X_{\tau_a^-} \in dy \}}\right]=
 \mathbb E_x\left[e^{-q \tau_a^-}\rm{\bf{1}}_{\{X_{\tau_a^-} < a \}}\right]\vartheta e^{-\vartheta y} dy, \ \ for \ \ y > 0,
 \]
which is due to the lack of memory of exponential distributions
\end{Remark}

For the stopping time $\tau_c^+$ in (2.6),  similar results to Lemma 2.1 hold.
\begin{Lemma}
For $x < c$, $q >0$ and $\rho \geq 0$,
\begin{equation}
\left(\begin{array}{cc}
\mathbb E_x\left[e^{-q \tau_c^+}\rm{\bf{1}}_{\{X_{\tau_c^+ = c}\}}\right]\\
\mathbb E_x\left[e^{-q \tau_c^+ -\rho (X_{\tau_c^+ - c})}\rm{\bf{1}}_{\{X_{\tau_c^+ > c }\}}\right]
\end{array}\right)=\left(\begin{array}{ccc}
F_3^{q}(\hat{c})& D_3^{\rho,q}(\hat{c})\\
F_4^{q}(\hat{c})& D_4^{\rho,q}(\hat{c})\\
\end{array}\right)^{-1}\left(\begin{array}{cc}
F_3^{q}(\hat{x})\\
F_4^{q}(\hat{x})\\
\end{array}\right).
\end{equation}
\end{Lemma}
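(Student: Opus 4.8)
The plan is to mirror the proof of Lemma 2.1, simply reflecting the lower exit below $a$ into the upper exit above $c$: the active contours become $\Gamma_3=(-\eta,0)$ and $\Gamma_4=(-\infty,-\eta)$, the overshoot is now realized through an upward $\mathrm{Exp}(\eta)$ jump, and boundedness is gained in the direction $x\to-\infty$, where $e^{-xz}$ with $z<0$ decays. As in part (ii) of Lemma 2.1, I would first reduce to $\alpha=0$ by passing to $\hat X_t=X_t-\alpha$, which satisfies $d\hat X_t=-\kappa\hat X_t\,dt+dL_t$ and replaces the level $c$ by $\hat c$; so assume $\alpha=0$, whence $\hat x=x$ and $\hat c=c$.

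For $i=3,4$ I would introduce the candidate functions
\[
g_i(x)=\begin{cases} F_i^q(x), & x\le c,\\[2pt] D_i^{\rho,q}(c)\,e^{-\rho(x-c)}, & x>c, \end{cases}
\]
each bounded and differentiable on $(-\infty,c]$, and the goal is to prove $\mathcal{A}g_i(x)-q\,g_i(x)=0$ for $x\le c$. Inserting $F_i^q(x)=\int_{\Gamma_i}|\psi_q(z)|e^{-xz}\,dz$ and integrating the drift term by parts in $z$, the local part collapses to $\kappa\,\partial_z|\psi_q(z)|\cdot z+|\psi_q(z)|(\kappa+\tfrac{\sigma^2}{2}z^2-\mu z)$ under the integral, the boundary terms vanishing since $\lim_{z\downarrow-\eta}z|\psi_q(z)|=0$ (from the factor $(\eta+z)^{\lambda p/\kappa}$) and $\lim_{z\uparrow 0}z|\psi_q(z)|=0$ (from $z^{q/\kappa}$) on $\Gamma_3$, with the analogous limits at $-\eta$ and $-\infty$ on $\Gamma_4$ (the Gaussian factor killing the tail). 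The nonlocal term splits by jump direction: downward $\mathrm{Exp}(\vartheta)$ jumps keep $x+y<c$ and, by Fubini, contribute $\lambda(1-p)\int_{\Gamma_i}\tfrac{\vartheta}{\vartheta-z}|\psi_q(z)|e^{-xz}\,dz$; upward $\mathrm{Exp}(\eta)$ jumps must be split at $y=c-x$, the part staying below $c$ giving $\lambda p\int_{\Gamma_i}\tfrac{\eta}{\eta+z}\bigl(1-e^{-(\eta+z)(c-x)}\bigr)|\psi_q(z)|e^{-xz}\,dz$ and the part overshooting $c$ giving the single term $\lambda p\,\tfrac{\eta}{\eta+\rho}\,D_i^{\rho,q}(c)\,e^{\eta(x-c)}$. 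Using $\tfrac{\vartheta}{\vartheta-z}=1+\tfrac{z}{\vartheta-z}$ and $\tfrac{\eta}{\eta+z}=1-\tfrac{z}{\eta+z}$, the constant pieces cancel the $-\lambda g_i(x)$ terms, the remaining rational pieces assemble the integrand of the defining ODE for $|\psi_q(z)|$ (the footnote identity in Lemma 2.1), and the cut-off factor $e^{-(\eta+z)(c-x)}$ recombines—precisely because $D_i^{\rho,q}$ carries the weight $\tfrac{\eta+\rho}{z+\eta}$—into exactly $-\lambda p\,\tfrac{\eta}{\eta+\rho}D_i^{\rho,q}(c)e^{\eta(x-c)}$, which annihilates the overshoot term. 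Hence $\mathcal{A}g_i-q\,g_i\equiv 0$ on $(-\infty,c]$.

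With this identity in hand, applying It\^o's formula to $e^{-q(t\wedge\tau_c^+)}g_i(X_{t\wedge\tau_c^+})$ produces a bounded local martingale, and the dominated convergence theorem gives $g_i(x)=\mathbb{E}_x\!\left[e^{-q\tau_c^+}g_i(X_{\tau_c^+})\right]$ for $x\le c$. Splitting according to whether $X$ creeps ($X_{\tau_c^+}=c$, where $g_i(c)=F_i^q(c)$) or jumps strictly over $c$ (where $g_i(X_{\tau_c^+})=D_i^{\rho,q}(c)e^{-\rho(X_{\tau_c^+}-c)}$) yields, for $i=3,4$, the linear equations
\[
F_i^q(c)\,\mathbb{E}_x\!\left[e^{-q\tau_c^+}\mathbf{1}_{\{X_{\tau_c^+}=c\}}\right]+D_i^{\rho,q}(c)\,\mathbb{E}_x\!\left[e^{-q\tau_c^+-\rho(X_{\tau_c^+}-c)}\mathbf{1}_{\{X_{\tau_c^+}>c\}}\right]=F_i^q(x),
\]
and inverting the resulting $2\times2$ coefficient matrix gives (2.20); the shift of part (ii) then restores the general $\alpha$ and the hatted arguments $\hat c,\hat x$.

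The step I expect to be the main obstacle is the verification of $\mathcal{A}g_i-q\,g_i=0$ on $(-\infty,c]$, specifically the bookkeeping of the nonlocal term: one must split the upward-jump integral at the crossing point $y=c-x$ and confirm that the overshoot contribution cancels the cut-off piece exactly, which is what forces the weight $\tfrac{\eta+\rho}{z+\eta}$ in the definition of $D_i^{\rho,q}$. Once this cancellation is carried out just as in (2.12)--(2.14), the remaining martingale and linear-algebra steps are routine.
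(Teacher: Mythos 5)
Your proposal is correct and follows essentially the same route as the paper: reduce to $\alpha=0$, define the candidate harmonic functions $g_i$ built from $F^q_i$ on $(-\infty,c]$ and $D^{\rho,q}_i(c)e^{-\rho(x-c)}$ above $c$, verify $\mathcal{A}g_i-qg_i=0$ via integration by parts and the ODE satisfied by $|\psi_q|$, apply the optional stopping/dominated convergence argument, and invert the resulting $2\times 2$ system. The paper simply cites "similar to the derivation of (2.15)" for the generator step; your explicit bookkeeping of the upward-jump split at $y=c-x$ and the cancellation of the overshoot term against the cut-off piece is exactly the computation that citation points to.
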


\begin{proof}
From the derivation of Lemma 2.1, it is enough to consider the case of $\alpha=0$. Thus we assume that $x=\hat{x}$ in this proof. For $i=1,2$, consider the following function
\begin{equation}
g_i(x)=\left\{\begin{array}{cc}
F_{2+i}^q(x), & x \leq  c,\\
D^{\rho,q}_{2+i}(c) e^{-\rho(x-c)}, & x > c,
\end{array}\right.
\end{equation}
where $F_{2+i}^q(x)$ and $D^{\rho,q}_{2+i}(c)$ are given by (2.9).
Similar to the derivation of (2.15), it can be shown that
\begin{equation}
\mathbb E_x\left[e^{-q \tau_c^+} g_i (X_{\tau_c^+})\right]=g_i(x), \ \ for \ \ x \leq c.
\end{equation}
This result and the definition of $g_i(x)$ for $x > c$ in (2.21) give us
\[
D_{2+i}^{\rho,q}(c) \mathbb E_x\left[e^{-q \tau_c^+ - \rho (X_{\tau_c^+}-c)}\textbf{1}_{\{X_{\tau_c^+} > c\}}\right]+
\mathbb E_x\left[e^{-q \tau_c^+}\textbf{1}_{\{X_{\tau_c^+} = c\}}\right]g_i(c)=g_i(x),  \ \ x \leq c,
\]
from which (2.20) is deduced.
\end{proof}

Similar to Remark 2.3, it also holds that
 \[\mathbb E_x\left[e^{-q \tau_c^+}\rm{\bf{1}}_{\{ X_{\tau_c^+} - c \in dy \}}\right]=
 \mathbb E_x\left[e^{-q \tau_c^+}\rm{\bf{1}}_{\{X_{\tau_c^+} > c \}}\right]\eta e^{-\eta y} dy, \ \ for \ \  y > 0.
 \]
In particular, we have the following lemma.

\begin{Lemma}
(i) For any nonnegative measurable function $f(x)$ on $\mathbb R$ such that $\int_{-\infty}^{0}f(a+y)e^{\vartheta y}dy < \infty$, $q>0$ and $x > a$, it holds that
\begin{equation}
\begin{split}
\mathbb E_x\left[e^{-q\tau_a^-}f(X_{\tau_a^-})\right]
&=\mathbb E_x\left[e^{-q\tau_a^-}\rm{\bf{1}}_{\{X_{\tau_a^-} < a\}}\right]\int_{-\infty}^{0}f(a + y)\vartheta e^{\vartheta y}dy\\
&\ \ + f(a)\mathbb E_x\left[e^{-q\tau_a^-}\rm{\bf{1}}_{\{X_{\tau_a^-} = a\}}\right].
\end{split}
\end{equation}
(ii) For any nonnegative measurable function $f(x)$ on $\mathbb R$ such that $\int_{0}^{\infty}f(c+y)e^{-\eta y}dy < \infty$, $q > 0$ and $x < c$, it holds that
\begin{equation}
\begin{split}
\mathbb E_x\left[e^{-q\tau_c^+}f(X_{\tau_c^+})\right]
&=\mathbb E_x\left[e^{-q\tau_c^+}\rm{\bf{1}}_{\{X_{\tau_c^+} > c\}}\right]\int_{0}^{\infty}f(c + y)\eta e^{-\eta y}dy\\
&\ \ +f(c) \mathbb E_x\left[e^{-q\tau_c^{+}}\rm{\bf{1}}_{\{X_{\tau_c^+}=c\}}\right].
\end{split}
\end{equation}
\end{Lemma}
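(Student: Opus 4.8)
The plan is to prove part (i) by decomposing $\mathbb{E}_x[e^{-q\tau_a^-}f(X_{\tau_a^-})]$ according to whether the process reaches the level $a$ continuously (so that $X_{\tau_a^-}=a$) or crosses it by a negative jump (so that $X_{\tau_a^-}<a$), and then reading off the two resulting contributions directly from the overshoot formula established in Remark 2.3. Part (ii) will then follow by the same argument, using instead the analogous overshoot formula displayed just before the statement.

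First I would write
\begin{equation*}
\mathbb{E}_x\left[e^{-q\tau_a^-}f(X_{\tau_a^-})\right]=\mathbb{E}_x\left[e^{-q\tau_a^-}f(X_{\tau_a^-})\textbf{1}_{\{X_{\tau_a^-}=a\}}\right]+\mathbb{E}_x\left[e^{-q\tau_a^-}f(X_{\tau_a^-})\textbf{1}_{\{X_{\tau_a^-}<a\}}\right].
\end{equation*}
On the event $\{X_{\tau_a^-}=a\}$ the process creeps to the boundary through its diffusive component, so $f(X_{\tau_a^-})=f(a)$ and the first term equals $f(a)\,\mathbb{E}_x[e^{-q\tau_a^-}\textbf{1}_{\{X_{\tau_a^-}=a\}}]$, which is precisely the second summand on the right-hand side of (2.23).

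Next I would treat the jump term. On $\{X_{\tau_a^-}<a\}$ the downcrossing is produced by a negative jump; setting $y=a-X_{\tau_a^-}>0$ and integrating $f(a-y)$ against the measure supplied by Remark 2.3 gives
\begin{equation*}
\mathbb{E}_x\left[e^{-q\tau_a^-}f(X_{\tau_a^-})\textbf{1}_{\{X_{\tau_a^-}<a\}}\right]=\mathbb{E}_x\left[e^{-q\tau_a^-}\textbf{1}_{\{X_{\tau_a^-}<a\}}\right]\int_{0}^{\infty}f(a-y)\vartheta e^{-\vartheta y}\,dy.
\end{equation*}
The substitution $y\mapsto -y$ converts the integral into $\int_{-\infty}^{0}f(a+y)\vartheta e^{\vartheta y}\,dy$, which matches the first summand of (2.23); the hypothesis $\int_{-\infty}^{0}f(a+y)e^{\vartheta y}\,dy<\infty$ both guarantees finiteness of this integral and, since $f\geq0$, legitimizes the interchange of expectation and integration by Tonelli's theorem. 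Adding the two contributions yields (2.23).

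Because all the probabilistic content is already packaged in the overshoot law of Remark 2.3, I do not anticipate a genuine obstacle: the only point needing care is the exchange of expectation and integral, handled by nonnegativity of $f$ together with the integrability assumption. Part (ii) is proved word for word, replacing the downcrossing overshoot $a-X_{\tau_a^-}$ by the upcrossing overshoot $X_{\tau_c^+}-c$, which is exponentially distributed with rate $\eta$ according to the formula displayed before the lemma, and using $f(X_{\tau_c^+})=f(c)$ on the creeping event $\{X_{\tau_c^+}=c\}$.
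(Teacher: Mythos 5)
Your proof is correct and follows essentially the same route as the paper, which states Lemma 2.3 without proof as an immediate consequence of the memoryless-overshoot formulas in Remark 2.3 and the analogous display for $\tau_c^+$; your decomposition into the creeping event $\{X_{\tau_a^-}=a\}$ and the jump event $\{X_{\tau_a^-}<a\}$, followed by integration against the exponential overshoot law (justified by Tonelli), is exactly the argument the authors leave implicit.
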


\section{Main results}
In this section, assume that $b \in \mathbb R$, $s > 0$, $p > -s$ and $-\vartheta < 3\cdot q < \eta $. The objection is to deduce the expression of
\begin{equation}
\begin{split}
V(x):
&=\int_{0}^{\infty}se^{-s T}\mathbb E_x\left[e^{-p \int_{0}^{T}\textbf{1}_{\{X_t \leq b\}}dt+q X_T}\right]dT\\
&=\mathbb E_x\left[e^{-p \int_{0}^{e(s)}\textbf{1}_{\{X_t \leq b\}}dt+q X_{e(s)}}\right],
\end{split}
\end{equation}
where the variable $e(s)$, independent of $X$, is an exponential distribution with parameter $s$. Recall (2.3) and (2.4). Since $-\vartheta < 3\cdot q < \eta $, we have
\[\mathbb E_x\left[e^{ q X_{e(s)}}\right]=s\int_{0}^{\infty}e^{-st}\mathbb E_x\left[e^{ q X_{t}}\right]dt < \infty, \]
for any given $s > 0$ and $x \in \mathbb R$. For $x \in \mathbb R$, define
\begin{equation}
\begin{split}
&V_1^s(x):=\mathbb E_x\left[e^{ q X_{e(s)}}\right], \\
&T_{\eta}^{s}(x)=\int_{0}^{\infty}V_1^{s}(x+z)\eta e^{-\eta z}dz,\\  &T_{\vartheta}^{s}(x)=\int_{-\infty}^{0}V_1^{s}(x+z)\vartheta e^{\vartheta z}dz.
\end{split}
\end{equation}
Due to (2.3), $V_1^s(x)$, $T_{\eta}^{s}(x)$ and $T_{\vartheta}^{s}(x)$ are considered as known functions from now on.

The main results are given in Theorem 3.1, and for its derivation, we improve the approach in Wu and Zhou (2016). Especially, the technic used in proving $V^{\prime}(b-)=V^{\prime}(b+)$ in Lemma 3.1 (will be presented after the proof of Theorem 3.1) is new and novel, and is expected to give some motivations to the investigation on the occupation times of Ornstein-Uhlenbeck processes driven by more general L\'evy processes and other stochastic processes.

\begin{Theorem}
For $s > 0$, $p > -s$, $b \in \mathbb R$ and $-\vartheta < 3 \cdot q < \eta$, we have
\begin{equation}
\begin{split}
&\mathbb E_x\left[e^{-p \int_{0}^{e(s)}\rm{\bf{1}}_{\{X_t \leq b\}}dt+q X_{e(s)}}\right]=
\left\{\begin{array}{cc}
\frac{s}{k}V_1^{k}(x)+\sum_{i=1}^{2}J_i F_{2+i}^{k}(\hat{x}),&  x \leq b,\\
V_1^s(x)+\sum_{i=1}^{2}N_i F_i^s(\hat{x}), & x \geq b,
\end{array}\right.
\end{split}
\end{equation}
where $k=s+p$ and the constants $J_i$ and $N_i$ satisfy
\begin{equation}
\left(J_1,J_2,N_1,N_2\right)\mathbb Q = w ,
\end{equation}
with
\[
w=\left(V^s_1(b)-\frac{s V_1^{k}(b)}{k},V_1^{ s \prime}(b)- \frac{s V_1^{k \prime}(b)}{k},T_{\eta}^s(b)-\frac{s}{k}T_{\eta}^{k}(b),
T_{\vartheta}^s(b)-\frac{s}{k}T_{\vartheta}^{k}(b)\right),
\]
and
\begin{equation}
\mathbb Q=\left(\begin{array}{ccccc}
F_3^{k}(\hat{b})&F_3^{k \prime}(\hat{b})&D_3^{0,k}(\hat{b})&C_3^{0,k}(\hat{b})\\
F_4^{k}(\hat{b})&F_4^{k \prime}(\hat{b})&D_4^{0,k}(\hat{b})&C_4^{0,k}(\hat{b})\\
-F_1^{s}(\hat{b})&-F_1^{s \prime}(\hat{b})&-D_1^{0,s}(\hat{b})&-C_1^{0,s}(\hat{b})\\
-F_2^{s}(\hat{b})&-F_2^{s \prime}(\hat{b})&-D_2^{0,s}(\hat{b})&-C_2^{0,s}(\hat{b})
\end{array}\right).
\end{equation}
Here, in (3.3) and (3.5), for any given $r > 0$, $F_i^{r}(\hat{b})$, $D_i^{0,r}(\hat{b})$ and $C_i^{0,r}(\hat{b})$ are given by (2.9).
\end{Theorem}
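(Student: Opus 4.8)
The plan is to characterise $V$ as the unique, suitably bounded solution of an integro-differential equation and then to solve that equation explicitly by a two-piece ansatz, reducing the nonlocal problem to four scalar conditions at $b$. First I would use the exponential-killing (Feynman--Kac) representation in (3.1): writing $V(x)=s\int_0^\infty e^{-sT}\mathbb E_x[e^{-p\int_0^T\mathbf 1_{\{X_t\le b\}}dt}e^{qX_T}]\,dT$ and differentiating the inner Kolmogorov equation under the integral sign, one shows that $V$ solves
\[
\mathcal A V(x)-\big(s+p\,\mathbf 1_{\{x\le b\}}\big)V(x)+s e^{qx}=0,\qquad x\in\mathbb R,
\]
with $\mathcal A$ the generator used in the proof of Lemma 2.1. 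The assumption $-\vartheta<3\cdot q<\eta$ (and $p>-s$) guarantees that $V$ is finite and dominated by a multiple of $V_1^s$ or $V_1^k$, which both controls growth at $\pm\infty$ and yields uniqueness of the bounded solution.

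Next I would build the candidate piecewise. On $\{x>b\}$ the equation is $\mathcal AV-sV+se^{qx}=0$: here $V_1^s$ is a particular solution (it solves exactly this with killing rate $s$), and by Lemma 2.1 the functions $F_1^s(\hat x),F_2^s(\hat x)$, coming from $\Gamma_1,\Gamma_2$ and hence decaying as $x\to+\infty$, span the admissible homogeneous solutions. On $\{x<b\}$ the killing rate is $k=s+p$, so $\tfrac{s}{k}V_1^k$ is the matching particular solution and $F_3^k(\hat x),F_4^k(\hat x)$, from $\Gamma_3,\Gamma_4$ and decaying as $x\to-\infty$, are the admissible homogeneous solutions. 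This is exactly the form (3.3), with four unknowns $J_1,J_2,N_1,N_2$; the growth bookkeeping that discards the other solutions is routine from the definitions (2.9).

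The heart of the argument is to pin down the four constants. Two conditions come from the diffusion: since $\sigma>0$, a jump of $V$ or of $V'$ at $b$ would put a $\delta$ or $\delta'$ into $\tfrac{\sigma^2}{2}V''$ that no other (bounded) term can absorb, forcing continuity at $b$ (this is column~$1$ of $\mathbb Q$) and the smooth fit $V'(b-)=V'(b+)$ (column~$2$). The other two come from the jump term. Plugging the two-piece candidate into the equation on $\{x<b\}$, the local part and the downward integral use the lower branch consistently, so the only defect is produced by upward jumps that cross $b$; substituting $u=x+y$ shows this defect equals $e^{\eta x}$ times the $x$-independent constant $\lambda p\eta\int_b^\infty[V^+(u)-V^-(u)]e^{-\eta u}\,du$, where $V^+,V^-$ are the two branches of (3.3) regarded as functions on all of $\mathbb R$. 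Using $D_i^{0,q}(\hat x)=\int_0^\infty F_i^q(\hat x+z)\eta e^{-\eta z}\,dz$ and $T_\eta^q(b)=\int_0^\infty V_1^q(b+z)\eta e^{-\eta z}\,dz$ for $i\ne4$, and handling the divergent $i=4$ term through the exponential extension of Lemma 2.2 (whose rate $\rho$ cancels, leaving the $\rho=0$ function $D_4^{0,k}$), the vanishing of this constant becomes column~$3$. Symmetrically, on $\{x>b\}$ only downward-crossing jumps contribute, the defect vanishes iff $\int_{-\infty}^b[V^+(u)-V^-(u)]e^{\vartheta u}\,du=0$, and via the $C_i^{0,\cdot}$ (for $i\ne2$) and $T_\vartheta$ identities this is column~$4$. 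Collecting all four gives $(J_1,J_2,N_1,N_2)\mathbb Q=w$; I would finish by checking that $\mathbb Q$ is nonsingular and invoking uniqueness to identify the candidate with $V$.

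I expect the decisive obstacle to be the smooth fit $V'(b-)=V'(b+)$ in column~$2$. Continuity, the selection of the $\Gamma_i$, and the two jump-consistency conditions are essentially forced or purely computational, but establishing $C^1$-regularity of $V$ at the barrier, where the killing rate is discontinuous and the driver has both a diffusive and a jump component, is genuinely delicate; this is exactly the step the authors flag as new, so I would isolate it as the separate Lemma 3.1 and treat its $C^1$-matching as the one nontrivial input to the scheme above.
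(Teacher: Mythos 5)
Your overall architecture is genuinely different from the paper's. You propose to characterise $V$ as the unique suitably growing solution of the piecewise-killed equation $\mathcal A V-(s+p\,\mathbf 1_{\{x\le b\}})V+se^{qx}=0$ and then to verify a two-piece ansatz; the paper never writes this equation down. Instead it applies the strong Markov property at $\tau_b^+$ (for $x<b$) and at $\tau_b^-$ (for $x>b$) together with Lemmas 2.1--2.3, which \emph{produces} the representation (3.3) directly, with $J_i,N_i$ expressed through the three unknown boundary functionals $V(b)$, $\int_0^\infty V(b+z)\eta e^{-\eta z}dz$ and $\int_{-\infty}^0 V(b+z)\vartheta e^{\vartheta z}dz$ via (3.9) and (3.11); eliminating these yields exactly your four columns --- continuity, smooth fit, and the two jump-consistency identities (3.14)--(3.15), which coincide with your ``defect'' conditions once the $\rho$-cancellation you describe is carried out. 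What the paper's route buys is that it entirely avoids two steps you label as routine but which are not: (i) a uniqueness theorem for the nonlocal equation with the unbounded source $se^{qx}$, and (ii) the claim that $F_1^s,F_2^s$ (resp.\ $F_3^k,F_4^k$) ``span the admissible homogeneous solutions'' on a half-line. Point (ii) is a genuine gap: for an integro-differential equation with non-constant coefficients the solution space on a half-line is not a priori two-dimensional, and nothing in (2.9) supplies the required completeness; the Markov-property derivation is precisely how the paper obtains the span for free.

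The second place your plan is thinner than the paper's is the smooth fit itself. You correctly isolate $V'(b-)=V'(b+)$ as the decisive input, but your only argument for it is the distributional heuristic that a kink would create a $\delta'$ in $\tfrac{\sigma^2}{2}V''$, which presupposes that $V$ already solves the equation in a sense strong enough to run that bootstrap. The paper's Lemma 3.1 proves it by a self-contained probabilistic computation: condition on whether the first jump time $T_1$ and the killing time $e(s)$ occur before the exit time $\tau^c_{b,\varepsilon}$ of the \emph{continuous} component from $(b-\varepsilon,b+\varepsilon)$, show the jump and killing contributions are $o(\varepsilon)$ (this is where $-\vartheta<3q<\eta$ enters, via H\"older), and use the symmetric exit asymptotics (3.19)--(3.20) to get $\lim_{\varepsilon\downarrow 0}\varepsilon^{-1}\bigl((V(b+\varepsilon)+V(b-\varepsilon))/2-V(b)\bigr)=0$, which together with the one-sided differentiability already visible in (3.8) and (3.10) forces $V'(b-)=V'(b+)$. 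If you keep the verification framework you could instead impose $C^1$ on the candidate and lean on uniqueness, but then gap (i) becomes load-bearing; either way, one of these missing pieces must be supplied before the linear system $(J_1,J_2,N_1,N_2)\mathbb Q=w$ is justified.
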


\begin{proof}
In this derivation, some similar ideas in Wu and Zhou (2016) will be used.
First, we know from (3.1) that
\begin{equation}
\begin{split}
V(x)\leq \left\{\begin{array}{cc}
\mathbb E_x\left[e^{q X_{e(s)}}\right],& if \ \ p\geq 0,\\
 \mathbb E_x\left[e^{-pe(s)+q X_{e(s)}}\right], & if - s < p < 0,
\end{array}\right.
\end{split}
\end{equation}
and note that
\begin{equation}
 \mathbb E_x\left[e^{-pe(s)+q X_{e(s)}}\right]=s\int_{0}^{\infty}e^{-s t} \mathbb E_x\left[e^{-pt+q X_{t}}\right]dt=\frac{s}{s+p} \mathbb E_x\left[e^{q X_{e(p+s)}}\right].
\end{equation}

For $x < b$, it follows from the lack of memory property of $e(s)$ and the strong Markov property of $X$ that (recall (3.1))
\begin{equation}
\begin{split}
V(x)
&=\mathbb E_x\left[e^{-p\tau_b^+-p\int_{\tau_b^+}^{e(s)}\textbf{1}_{\{X_t \leq b\}}dt+q X_{e(s)}}\textbf{1}_{\{e(s) > \tau_b^+\}}\right]\\
&+\mathbb E_x\left[e^{-pe(s)+q X_{e(s)}}\textbf{1}_{\{e(s) \leq \tau_b^+\}}\right]\\
&=\mathbb E_x\left[e^{-\kappa \tau_b^+}V(X_{\tau^+_b})\right]+ \frac{s}{\kappa}\mathbb E_x\left[e^{q X_{e(\kappa)}}\textbf{1}_{\{e(\kappa) \leq \tau_b^+\}}\right]\\
&=\mathbb E_x\left[e^{-\kappa \tau_b^+}\left(V(X_{\tau^+_b})-\frac{s}{\kappa}V_1^{\kappa}(X_{\tau^+_b})\right)\right]
+\frac{s}{\kappa}V_1^{\kappa}(x)\\
&=\frac{s}{\kappa}V_1^{\kappa}(x)+\sum_{i=1}^{2}J_i F_{2+i}^{\kappa}(\hat{x}),
\end{split}
\end{equation}
where $k =s+p$ and $V_1^{k}(x)$ is given by (3.2); the final equality follows from (2.20), (2.24) and (3.2) with
\begin{equation}
\left(J_1,J_2\right)=\Big(V(b)- \frac{s}{k}V_1^{k}(b),\int_{0}^{\infty}V(b+z)\eta e^{-\eta z}dz-\frac{s}{k}T_{\eta}^{k}(b)\Big)\left(\begin{array}{ccc}
F_3^{k}(\hat{b})& D_3^{0,k}(\hat{b})\\
F_4^{k}(\hat{b})& D_4^{0,k}(\hat{b})\\
\end{array}\right)^{-1}.
\end{equation}
Similarly, for $x > b$, formulas (2.10), (2.23) and (3.2) lead to
\begin{equation}
\begin{split}
&V(x)
=\mathbb E_x\left[e^{-p\int_{0}^{e(s)}\textbf{1}_{\{X_t \leq b\}}dt+qX_{e(s)}}\textbf{1}_{\{e(s) \leq \tau_b^-\}}\right]\\
&\ \ \ \ \ \ +\mathbb E_x\Big[e^{-p\int_{\tau_b^-}^{e(s)}\textbf{1}_{\{X_t \leq b\}}dt+qX_{e(s)}}\textbf{1}_{\{e(s) > \tau_b^-\}}\Big]\\
&=\mathbb E_x\left[e^{qX_{e(s)}}\textbf{1}_{\{e(s) \leq \tau_b^-\}}\right]+\mathbb E_x\left[
e^{-s\tau_b^-}V(X_{\tau^-_b})\right]\\
&=\mathbb E_x\left[e^{qX_{e(s)}}\right]-
\mathbb E_x\left[e^{qX_{e(s)}}\textbf{1}_{\{e(s) > \tau_b^-\}}\right]
+\mathbb E_x\left[e^{-s\tau_b^-}V(X_{\tau^-_b})\right]\\
&=V^s_1(x)+\mathbb E_x\left[e^{-s\tau_b^-}\big(V(X_{\tau^-_b})-V^s_1(X_{\tau^-_b})\big)\right]
=V_1^s(x)+ \sum_{i=1}^{2}N_i F_i^s(\hat{x}),
\end{split}
\end{equation}
where
\begin{equation}
\left(N_1,N_2\right)\left(\begin{array}{ccc}
F_1^{s}(\hat{b})&C_1^{0,s}(\hat{b})\\
F_2^{s}(\hat{b})&C_2^{0,s}(\hat{b})\\
\end{array}\right)=\left(V(b)-V^s_1(b),\int_{-\infty}^{0}V(b+z)\vartheta e^{\vartheta z}dz-T_{\vartheta}^s(b)\right).
\end{equation}

Formulas (3.9) and (3.11) imply
\begin{equation}
V(b)=\frac{s}{k}V_1^{k}(b)+\sum_{i=1}^{2}J_i F_{2+i}^{k}(\hat{b})=\sum_{i=1}^{2}N_i F_i^s(\hat{b})+V^s_1(b).
\end{equation}
Besides, we know $V^{\prime}(b-)=V^{\prime}(b+)$ (see Lemma 3.1), which combined with (3.8) and (3.10), leads to
\begin{equation}
\sum_{i=1}^{2}J_i \frac{\partial }{\partial \hat{x}}\Big(F_{2+i}^{k}(\hat{x})\Big)_{\hat{x}=\hat{b}}
+\frac{s}{k}V_1^{k \prime}(b)=\sum_{i=1}^{2}N_i \frac{\partial }{\partial \hat{x}}\Big(F_{i}^{s}(\hat{x})\Big)_{\hat{x}=\hat{b}}+V_1^{s \prime}(b).
\end{equation}

From (2.9), (3.2) and (3.10), we can derive
\[
\begin{split}
\int_{0}^{\infty}V(b+z)\eta e^{-\eta z}dz
&=\int_{0}^{\infty}V^s_1(b+z)\eta e^{-\eta z}dz+\sum_{i=1}^{2}N_i \int_{0}^{\infty}F_i^s(\hat{b}+z)\eta e^{-\eta z}dz\\
&=T_{\eta}^s(b)+\sum_{i=1}^{2}N_iD_i^{0,s}(\hat{b}),
\end{split}
\]
this result and formula (3.9) mean
\begin{equation}
\sum_{i=1}^{2}J_iD_{2+i}^{0,k}(\hat{b})+\frac{s}{k}T_{\eta}^{k}(b)=T_{\eta}^s(b)+\sum_{i=1}^{2}N_iD_i^{0,s}(\hat{b}).
\end{equation}
Applying similar derivations to (3.8) and (3.11) and using (3.2), we have
\begin{equation}
\begin{split}
&\sum_{i=1}^{2}N_i C_i^{0,s}(\hat{b})+T_{\vartheta}^s(b) =\int_{-\infty}^{0}V(b+z)\vartheta e^{\vartheta z}dz\\
&=\sum_{j=1}^{2}J_i\int_{-\infty}^{0}F_{2+i}^{k}(\hat{b}+z)\vartheta e^{\vartheta z}dz+\frac{s}{k}
T_{\vartheta}^{k}(b)
=\sum_{j=1}^{2}J_iC_{2+i}^{0,k}(\hat{b})+\frac{s}{k}
T_{\vartheta}^{k}(b),
\end{split}
\end{equation}
where the last equality is due to (2.9).
So, (3.3) is derived from (3.12)--(3.15) and the proof is completed.
\end{proof}

\begin{Lemma}
For the function $V(x)$, defined in (3.1), its derivative is continuous at $b$, i.e., $V^{\prime}(b-)=V^{\prime}(b+)$.
\end{Lemma}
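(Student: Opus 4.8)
The plan is to work directly with the probabilistic definition (3.1) of $V$, rather than with the explicit formulas of Theorem 3.1: the identity $V'(b-)=V'(b+)$ is precisely the fourth scalar equation needed to pin down $(J_1,J_2,N_1,N_2)$, so it must be obtained intrinsically, on pain of circularity. The crucial structural feature is that $\sigma>0$, which makes the behaviour of $X$ near $b$ locally Brownian and hence locally symmetric; this symmetry is what produces the smooth fit. First I would record that the one-sided derivatives of $V$ at $b$ exist: from the strong Markov decompositions (3.9) and (3.11) — which do \emph{not} invoke Lemma 3.1 — the function $V$ coincides on $(-\infty,b]$ with $x\mapsto\frac{s}{k}V_1^{k}(x)+\sum_i J_iF_{2+i}^{k}(\hat x)$ and on $[b,\infty)$ with $x\mapsto V_1^{s}(x)+\sum_i N_iF_i^{s}(\hat x)$, and each of these building blocks is smooth up to the boundary for arbitrary values of the constants. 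Thus $V'(b-)$ and $V'(b+)$ are well defined, and $V$ is continuous at $b$ by (3.12).

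The main step is a small-interval smooth-fit argument. Fix $\e>0$ small, let $\theta_\e:=\inf\{t>0:|X_t-b|\ge\e\}$, and apply the strong Markov property at $\theta_\e\wedge e(s)$ to $V(b)$. On the event that no compound-Poisson jump occurs before $\theta_\e$ and that $e(s)>\theta_\e$, the path of $X$ up to $\theta_\e$ is that of a Brownian motion with the bounded drift $\kappa(\alpha-X_t)+\mu$ and volatility $\sigma$, so $X$ leaves $(b-\e,b+\e)$ continuously and $X_{\theta_\e}\in\{b-\e,b+\e\}$, each with exit probability $\tfrac12+O(\e)$. Since $\int_0^{\theta_\e}\mathbf 1_{\{X_t\le b\}}\,dt\le\theta_\e$ and $\mathbb E_b[\theta_\e]=O(\e^2)$, the occupation discount equals $1+O(\e^2)$; the complementary events (an early jump, or $e(s)\le\theta_\e$) have probability $O(\e^2)$ and, as $V$ is bounded by (3.6), contribute $O(\e^2)$. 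Collecting these estimates yields
\[
V(b)=\tfrac12V(b+\e)+\tfrac12V(b-\e)+O(\e^2),
\]
the $O(\e)$ drift asymmetry of the exit law being absorbed since it multiplies $V(b+\e)-V(b-\e)=O(\e)$. Expanding $V(b\pm\e)=V(b)\pm\e V'(b\pm)+o(\e)$ and letting $\e\downarrow0$ gives $\tfrac{\e}{2}\bigl(V'(b+)-V'(b-)\bigr)=o(\e)$, hence $V'(b+)=V'(b-)$.

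The main obstacle will be the rigorous error control behind the displayed balance: one must quantify the exit law of the drifted Brownian motion from the symmetric interval (confirming the $\tfrac12+O(\e)$ split and $\mathbb E_b[\theta_\e]=O(\e^2)$), and verify that restricting to ``no jump before $\theta_\e$'' and to $\{e(s)>\theta_\e\}$ perturbs $V(b)$ only at order $\e^2$; boundedness of $V$ together with the elementary bound $\mathbb P(\text{a jump before }\theta_\e)\le\lambda\,\mathbb E_b[\theta_\e]=O(\e^2)$ make each of these routine, but they must be assembled carefully. As an independent cross-check I would also note the analytic route: $V$ solves, in the distributional sense, the integro-differential equation $\mathcal A V-(s+p\mathbf 1_{\{x\le b\}})V=-se^{qx}$; since every term other than $\tfrac{\sigma^2}{2}V''$ is bounded near $b$, the coefficient of the Dirac mass at $b$ must vanish, that is $\tfrac{\sigma^2}{2}\bigl(V'(b+)-V'(b-)\bigr)=0$, and $\sigma>0$ again forces continuity of $V'$.
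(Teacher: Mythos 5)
Your argument is essentially the paper's own proof: the paper likewise establishes the smooth fit by decomposing $V(b)$ at the exit time of the continuous component from $(b-\e,b+\e)$, showing the early-jump and early-killing events contribute $o(\e)$, and using the exit-law asymptotics $T_{b\pm\e}=\tfrac12+O(\e)$ together with $1-T_{b+\e}-T_{b-\e}=o(\e)$ (which it derives from explicit eigenfunction formulas for $L_1,L_2$ rather than asserting) to conclude $\bigl(V(b+\e)+V(b-\e)\bigr)/2-V(b)=o(\e)$ and hence $V'(b+)=V'(b-)$ via the pre-established one-sided derivatives. One step as you justify it would fail: the rare events do not contribute $O(\e^2)$ ``because $V$ is bounded'' --- for $q\neq0$ the function $V$ is unbounded and $X_{T_1}$, $X_{e(s)}$ can land far from $b$, which is precisely why the paper controls these terms by H\"older's inequality with exponent $3$ (the source of the standing hypothesis $-\vartheta<3q<\eta$, cf.\ Remark 3.1); with that substitution your argument coincides with the paper's.
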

\begin{proof}
(i) Define the continuous component of $X$ as $X^c$, i.e., $X_t^c=\alpha + e^{-\kappa t} (X_0-\alpha)+\int_{0}^{t}e^{\kappa (s-t)}dL^c_s$, where $L^c_t=\mu t +\sigma W_t$ for $t\geq 0$, and introduce the stopping time $\tau^c_{b,\varepsilon}:=\inf\{t > 0: X^c_t > b+\varepsilon \ \ or \ \ X^c_t < b-\varepsilon \}$ for $\varepsilon > 0$.

Expressions of $\mathbb E_b\left[e^{-q \tau^c_{b,\varepsilon}}\textbf{1}_{\{X^c_{\tau^c_{b,\varepsilon}}=b+\varepsilon\}}\right]$ and $\mathbb E_b\left[e^{-q \tau^c_{b,\varepsilon}}\textbf{1}_{\{X^c_{\tau^c_{b,\varepsilon}}=b-\varepsilon\}}\right]$ for $q > 0$ are known, one can refer to Borodin and Salminen (2002). Actually, for $\alpha =0$, applying a similar but simple discussion to Lemma 2.1, we can obtain
\begin{equation}
\mathbb E_x\left[e^{-q \tau^c_{b,\varepsilon}} L_i(X^c_{\tau^c_{b,\varepsilon}})\right]=L_i(x), \ \ b-\varepsilon \leq x \leq b+\varepsilon,
\end{equation}
where for $i=1,2$,
\[
L_i(x)=\int_{\Pi_i}|z^{\frac{q}{\kappa}-1}|e^{-\frac{\sigma^2}{4\kappa}z^2+\frac{\mu}{\kappa} z}e^{-x z}dz, \ \  b-\varepsilon \leq x \leq b+\varepsilon,
\]
with  $\Pi_1=(0,\infty)$ and $\Pi_2=(-\infty,0)$. Formula (3.16) is enough to obtain the desired result for $\alpha =0$, and the case of $\alpha \in \mathbb R$ can be treated similarly as in Lemma 2.1.
In short, it holds that
\begin{equation}
\mathbb E_b\left[e^{-q \tau^c_{b,\varepsilon}}\textbf{1}_{\{X^c_{\tau^c_{b,\varepsilon}}=b+\varepsilon\}}\right]=
\frac{L_1(\hat{b})L_2(\hat{b}-\varepsilon)-L_2(\hat{b})L_1(\hat{b}-\varepsilon)}
{L_1(\hat{b}+\varepsilon)L_2(\hat{b}-\varepsilon)-
L_2(\hat{b}+\varepsilon)L_1(\hat{b}-\varepsilon)},
\end{equation}
and
\begin{equation}
\mathbb E_b\left[e^{-q \tau^c_{b,\varepsilon}}\textbf{1}_{\{X^c_{\tau^c_{b,\varepsilon}}=b-\varepsilon\}}\right]=
\frac{L_2(\hat{b})L_1(\hat{b}+\varepsilon)-L_1(\hat{b})L_2(\hat{b}+\varepsilon)}
{L_1(\hat{b}+\varepsilon)L_2(\hat{b}-\varepsilon)-
L_2(\hat{b}+\varepsilon)L_1(\hat{b} -\varepsilon)}.
\end{equation}

For given $q > 0$, after some straightforward calculations, we arrive at
\begin{equation}
\begin{split}
&\lim_{\varepsilon \downarrow 0}\frac{\frac{1}{2}-\mathbb E_b\left[e^{-q \tau^c_{b,\varepsilon}}\textbf{1}_{\{X^c_{\tau^c_{b,\varepsilon}}=b-\varepsilon\}}\right]}{\varepsilon}=
\frac{L_1^{\prime \prime}(\hat{b})L_2(\hat{b})-L_2^{\prime \prime}(\hat{b})L_1(\hat{b})}{4\left(L_1(\hat{b})L_2^{\prime}(\hat{b})-
L_2(\hat{b})L_1^{\prime}(\hat{b})\right)},\\
&\lim_{\varepsilon \downarrow 0}\frac{\frac{1}{2}-\mathbb E_b\left[e^{-q \tau^c_{b,\varepsilon}}\textbf{1}_{\{X^c_{\tau^c_{b,\varepsilon}}=b+\varepsilon\}}\right]}{\varepsilon}=-
\frac{L_1^{\prime \prime}(\hat{b})L_2(\hat{b})-L_2^{\prime \prime}(\hat{b})L_1(\hat{b})}{4\left(L_1(\hat{b})L_2^{\prime}(\hat{b})-
L_2(\hat{b})L_1^{\prime}(\hat{b})\right)},
\end{split}
\end{equation}
and
\begin{equation}
\begin{split}
&\lim_{\varepsilon \downarrow 0}\frac{1-\mathbb E_b\left[e^{-q \tau^c_{b,\varepsilon}}\right]}{\varepsilon^2}
=\frac{L_1^{\prime \prime}(\hat{b})L_2^{\prime}(\hat{b})-L_1^{\prime}(\hat{b})L_2^{\prime \prime}(\hat{b})}{2\left(L_1(\hat{b})L_2^{\prime}(\hat{b})-
L_2(\hat{b})L_1^{\prime}(\hat{b})\right)}.
\end{split}
\end{equation}
Note that $L^{\prime}_1(\hat{b}) < 0$ and $L_2^{\prime}(\hat{b}) > 0$, thus
$L_1(\hat{b})L_2^{\prime}(\hat{b})-
L_2(\hat{b})L_1^{\prime}(\hat{b}) \neq 0$.

(ii)
It is known from (3.8) and (3.10) that both  $V^{\prime}(b-)$ and $V^{\prime}(b+)$ exist, so it is enough to establish the following identity:
\begin{equation}
\lim_{\varepsilon \downarrow 0}\frac{\left(V(b+\varepsilon)+V(b-\varepsilon)\right)/2-V(b)}{\varepsilon} = 0.
\end{equation}

By recalling (3.1) and letting $T_1$ denote the first jump time of the Poisson process $N_t$ in (2.1), we deduce
\begin{equation}
\begin{split}
V(b)
&=\mathbb E_b\left[e^{-p\int_{0}^{e(s)}\textbf{1}_{\{X_t \leq b\}}dt+q X_{e(s)}}\right]\\
&=\mathbb E_b\left[e^{-p\int_{0}^{e(s)}\textbf{1}_{\{X_t \leq b\}}dt+q X_{e(s)}}\textbf{1}_{\{e(s)> \tau^c_{b,\varepsilon}\}}\textbf{1}_{\{T_1 \leq \tau^c_{b,\varepsilon}\}}\right]\\
&\ \ +
\mathbb E_b\left[e^{-p\int_{0}^{e(s)}\textbf{1}_{\{X_t \leq b\}}dt+q X_{e(s)}}\textbf{1}_{\{e(s)> \tau^c_{b,\varepsilon}\}}\textbf{1}_{\{T_1>\tau^c_{b,\varepsilon}\}}\right]\\
&\ \ +\mathbb E_b\left[e^{-p\int_{0}^{e(s)}\textbf{1}_{\{X_t \leq b\}}dt+q X_{e(s)}}\textbf{1}_{\{e(s) \leq \tau^c_{b,\varepsilon}\}}\right].
\end{split}
\end{equation}
For any $s > 0$, recall that $e(s)$ is independent of $\tau^c_{b,\varepsilon}$ and note that
\begin{equation}
\begin{split}
&0\leq \lim_{\varepsilon \downarrow 0}\mathbb E_b\left[e^{q X_{e(s)}}\textbf{1}_{\{e(s) \leq \tau^c_{b,\varepsilon}\}}\right]/\varepsilon \\
&\leq \lim_{\varepsilon \downarrow 0} \left(\mathbb E_b\left[e^{3 q X_{e(s)}}\right]\right)^{\frac{1}{3}} \left(\mathbb E_b\left[\textbf{1}_{\{e(s) \leq  \tau^c_{b,\varepsilon}\}}\right]/\varepsilon^{3/2}\right)^{\frac{2}{3}}=0,
\end{split}
\end{equation}
where the equality follows from (3.20).

For the first term on the right-hand side of (3.22), we have
\begin{equation}
\begin{split}
&\mathbb E_b\left[e^{-p\int_{0}^{e(s)}\textbf{1}_{\{X_t \leq b\}}dt+qX_{e(s)}}\textbf{1}_{\{e(s)> \tau^c_{b,\varepsilon}\}}\textbf{1}_{\{T_1 \leq  \tau^c_{b,\varepsilon}\}}\right]\\
& \leq \left\{\begin{array}{ccc}
\mathbb E_b\left[e^{qX_{e(s)}}\textbf{1}_{\{T_1 \leq  \tau^c_{b,\varepsilon}\}}\right], & if \ \ p \geq 0,\\
\frac{s}{s+p}\mathbb E_b\left[e^{q X_{e(s+p)}}\textbf{1}_{\{T_1 \leq  \tau^c_{b,\varepsilon}\}}\right],
& if \ \ -s < p < 0.
\end{array}\right.
\end{split}
\end{equation}
From (3.23) and (3.24), we arrive at (since $T_1$ is an exponentially distributed random and independent of $\tau^c_{b,\varepsilon}$)
\begin{equation}
\begin{split}
&0 \leq \lim_{\varepsilon \downarrow 0}\mathbb E_b\left[e^{-p\int_{0}^{e(s)}\textbf{1}_{\{X_t \leq b\}}dt+qX_{e(s)}}\textbf{1}_{\{e(s)> \tau^c_{b,\varepsilon}\}}\textbf{1}_{\{T_1 \leq  \tau^c_{b,\varepsilon}\}}\right]/\varepsilon =0.
\end{split}
\end{equation}
Similar calculations show that
\begin{equation}
\begin{split}
&0\leq \lim_{\varepsilon \downarrow 0}\mathbb E_b\left[e^{-p\int_{0}^{e(s)}\textbf{1}_{\{X_t \leq b\}}dt+q X_{e(s)}}\textbf{1}_{\{e(s) \leq \tau^c_{b,\varepsilon}\}}\right]/\varepsilon \leq 0.
\end{split}
\end{equation}

Note that $\{X_t,t < T_1\}$ and $\{X^c_t,t < T_1\}$ have the same distribution. This fact and the application of the strong Markov property of $X$ will yield
\begin{equation}
\begin{split}
&\mathbb E_b\left[e^{-p\int_{0}^{e(s)}\textbf{1}_{\{X_t \leq b\}}dt+qX_{e(s)}}\textbf{1}_{\{e(s) > \tau^c_{b,\varepsilon}\}}\textbf{1}_{\{T_1 > \tau^c_{b,\varepsilon}\}}\right]\\
&=\mathbb E_b\left[e^{-p\int_{0}^{\tau^c_{b,\varepsilon}}\textbf{1}_{\{X^c_t \leq b\}}dt}
e^{-s\tau^c_{b,\varepsilon}}\textbf{1}_{\{T_1>\tau_{b,\varepsilon}^c\}}V(X^c_{\tau_{b,\varepsilon}^c})\right]\\
&=\mathbb E_b\left[e^{-p\int_{0}^{\tau^c_{b,\varepsilon}}\textbf{1}_{\{X^c_t \leq b\}}dt}
e^{-(s+\lambda)\tau_{b,\varepsilon}^c}
\textbf{1}_{\{X^c_{\tau_{b,\varepsilon}^c}=b+\varepsilon\}}\right]V(b+\varepsilon)\\
&\ \ +\mathbb E_b\left[e^{-p\int_{0}^{\tau_{b,\varepsilon}^c}\textbf{1}_{\{X^c_t \leq b\}}dt}
e^{-(s+\lambda)\tau_{b,\varepsilon}^c}
\textbf{1}_{\{X^c_{\tau_{b,\varepsilon}^c}=b-\varepsilon\}}\right]V(b-\varepsilon),
\end{split}
\end{equation}
where the second equality is due to the fact that $T_1$ is independent of $X^c$ and $\tau_{b,\varepsilon}^c$.
For the sake of brevity, the two items on the right-hand side of (3.27) are denoted respectively by $T_{b+\varepsilon}\cdot V(b+\varepsilon)$ and $T_{b-\varepsilon}\cdot V(b-\varepsilon)$. It is clear that
\begin{equation}
e^{-(s+\lambda+\max\{p,0\})\tau_{b,\varepsilon}^c} \leq e^{-p\int_{0}^{\tau_{b,\varepsilon}^c}\textbf{1}_{\{X^c_s \leq b\}}ds} e^{-(s+\lambda)\tau_{b,\varepsilon}^c} \leq e^{-(s+\lambda+\min\{p,0\})\tau_{b,\varepsilon}^c}.
\end{equation}
The last formula and (3.19) imply that
\begin{equation}
\begin{split}
&-\infty < \underline{\lim}_{\varepsilon \downarrow 0}\frac{\frac{1}{2}-T_{b+\varepsilon}}{\varepsilon} \leq \overline{\lim}_{\varepsilon \downarrow 0}\frac{\frac{1}{2}-T_{b+\varepsilon}}{\varepsilon}< \infty,\\
&-\infty < \underline{\lim}_{\varepsilon \downarrow 0}\frac{\frac{1}{2}-T_{b-\varepsilon}}{\varepsilon} \leq \overline{\lim}_{\varepsilon \downarrow 0}\frac{\frac{1}{2}-T_{b-\varepsilon}}{\varepsilon}< \infty,
\end{split}
\end{equation}
and
\begin{equation}
\begin{split}
\lim_{\varepsilon \downarrow 0}\frac{1-T_{b-\varepsilon}-T_{b+\varepsilon}}{\varepsilon}=0.
\end{split}
\end{equation}

Therefore, from (3.22), (3.25), (3.26) and (3.27), we obtain the desired conclusion that
\begin{equation}
\begin{split}
&\lim_{\varepsilon \downarrow 0}\frac{\left(V(b+\varepsilon)+V(b-\varepsilon)\right)/2-V(b)}{\varepsilon}\\
&=\lim_{\varepsilon \downarrow 0}\left(V(b+\varepsilon)\frac{1/2-T_{b+\varepsilon}}{\varepsilon}+V(b-\varepsilon)
\frac{1/2-T_{b-\varepsilon}}{\varepsilon}\right)\\
&=\lim_{\varepsilon \downarrow 0}\left\{(V(b+\varepsilon)-V(b))\frac{1/2-T_{b+\varepsilon}}{\varepsilon}+(V(b-\varepsilon)-V(b))
\frac{1/2-T_{b-\varepsilon}}{\varepsilon}\right\}\\
&=0.
\end{split}
\end{equation}
where the second equality follows from (3.30) and the third one is due to (3.29) and the result that $V(x)$ is continuous at $b$ (see (3.8) and (3.10) and (3.12)).
\end{proof}

\begin{Remark}
The number $3$ appeared in the restriction of $-\vartheta < 3\cdot q < \eta$ is not important. In fact, we use the number $3$ only in the derivation of (3.23). To guarantee $\lim_{\varepsilon \downarrow 0}\mathbb E_b\left[e^{q X_{e(s)}}\rm{\bf{1}}_{\{e(s) \leq \tau^c_{b,\varepsilon}\}}\right]/\varepsilon=0$, it is enough to require that $-\vartheta < \beta \cdot q < \eta$ for some $\beta > 2$ so that $\beta_1:=\frac{\beta}{\beta-1} < 2$, which ensures that (recall (3.20))
\[
\lim_{\varepsilon \downarrow 0}  \mathbb E_b\left[\rm{\bf{1}}_{\{e(s) \leq  \tau^c_{b,\varepsilon}\}}\right]/\varepsilon^{\beta_1}=0.
\]
\end{Remark}

\begin{Remark}
The distribution of $Y_1$ in (2.1) has no influence on the derivation of (3.21), this means that formula (3.21) holds for a process $X$ given by (1.1) and (2.1) with arbitrary jump distributions.
\end{Remark}


\bigskip

\textbf{References}

\end{document}